\documentclass[11pt]{article}

\usepackage{fullpage}
\usepackage{authblk}
\usepackage{natbib}
\usepackage{algorithm}
\usepackage{algpseudocode}
\usepackage{amsmath,amsthm,amsfonts}
\usepackage{amsmath}
\usepackage[subnum]{cases}
\usepackage{tcolorbox}
\usepackage{mathrsfs}
\usepackage{amssymb}
\usepackage{color}
\usepackage{mathrsfs}
\usepackage{enumitem}
\usepackage{bm}
\usepackage{multirow}
\usepackage{makecell}
\usepackage{graphicx}
\usepackage{subcaption}
\usepackage{comment}
\usepackage{cases}
\usepackage{appendix}
\usepackage{tikz}
\usetikzlibrary{arrows,shapes}
\usepackage[colorlinks= true, linkcolor=red, citecolor=blue, urlcolor=black]{hyperref}
\usepackage{cleveref}
\usepackage{marginnote}
\usepackage{diagbox} 


\numberwithin{equation}{section}

\newtheorem{theorem}{Theorem}[section]
\newtheorem{lemma}[theorem]{Lemma}
\newtheorem{corollary}[theorem]{Corollary}

\newcommand{\mr}{\mathbb{R}}

\newcommand{\norm}[1]{\left\|#1\right\|}
\newcommand{\inner}[1]{\left\langle #1\right\rangle}

\newcommand{\E}{\mathcal{E}}

\DeclareMathOperator*{\argmin}{arg\,min}

\usepackage{booktabs}
\usepackage{pgfplots}
\usetikzlibrary{calc,intersections}

\usetikzlibrary{shapes.geometric, arrows, positioning}

\tikzset{
	mybox/.style  = {draw, rectangle, minimum width=4cm, minimum height=0.8cm, text centered, text width=4.4cm,   
		font=\normalsize},
	box/.style  = {draw, rectangle, minimum width=2.0cm, minimum height=0.6cm, text centered, text width=3.0cm,   
		font=\normalsize},
	myarrow/.style = {line width=0.2pt, draw=black, -triangle 60, postaction={draw, line width=0.2pt, shorten >=10pt,-}}
}

\tikzstyle{arrow} = [->, >=stealth, -triangle 60]

\allowdisplaybreaks

\setcounter{Maxaffil}{0}

\makeatletter
\newcommand{\leqnomode}{\tagsleft@true}
\newcommand{\reqnomode}{\tagsleft@false}
\makeatother

\begin{document}

\title{Proximal Subgradient Norm Minimization of ISTA and FISTA\thanks{This work was partially supported by grant 12288201 of NSF of China.}}

\author[1,2]{Bowen Li\qquad Bin Shi\thanks{Corresponding author, Email: \url{shibin@lsec.cc.ac.cn}} \qquad Ya-xiang Yuan} 

\affil[1]{State Key Laboratory of Scientific and Engineering Computing, Academy of Mathematics and Systems Science, Chinese Academy of Sciences, Beijing 100190, China  }

\affil[2]{University of Chinese Academy of Sciences, Beijing 100049, China}

\date\today

\maketitle

\begin{abstract}
For first-order smooth optimization, the research on the acceleration phenomenon has a long-time history. Until recently, the mechanism leading to acceleration was not successfully uncovered by the gradient correction term and its equivalent implicit-velocity form. Furthermore, based on the high-resolution differential equation framework with the corresponding emerging techniques, phase-space representation and Lyapunov function, the squared gradient norm of Nesterov's accelerated gradient descent (\texttt{NAG}) method at an inverse cubic rate is discovered. However, this result cannot be directly generalized to composite optimization widely used in practice, e.g., the linear inverse problem with sparse representation. In this paper, we meticulously observe a pivotal inequality used in composite optimization about the step size $s$ and the Lipschitz constant $L$ and find that it can be improved tighter. We apply the tighter inequality discovered in the well-constructed Lyapunov function and then obtain the proximal subgradient norm minimization by the phase-space representation, regardless of gradient-correction or implicit-velocity. Furthermore, we demonstrate that the squared proximal subgradient norm for the class of iterative shrinkage-thresholding algorithms (ISTA) converges at an inverse square rate, and the squared proximal subgradient norm for the class of faster iterative shrinkage-thresholding algorithms (FISTA) is accelerated to convergence at an inverse cubic rate.    
\end{abstract}



\section{Introduction}
\label{sec: intro}


Since entering the new century, we have witnessed the rapid development of machine learning, where the heart is the design of efficient algorithms. The major problem is that the computation and storage of Hessians are often infeasible in large-scale optimization. Hence, according to the cheap computation, simple gradient-based optimization algorithms have become the workhorse powering recent developments. Recall that the smooth convex unconstrained optimization problem is formulated as 
\[
\min_{x\in \mathbb{R}^d}f(x).
\]
Perhaps the simplest first-order method is the vanilla gradient descent, which can be traced back to~\citet{cauchy1847methode}. Taking a fixed step size $s>0$, the vanilla gradient descent is implemented as the recursive rule
\[
x_{k} = x_{k-1} - s\nabla f(x_{k-1}),
\]
given an initial point $x_0 \in \mathbb{R}^d$ and with the global convergence rate as
\begin{equation}
\label{eqn:1-over-k}
f(x_k) - f(x^\star) \leq O\left( \frac{1}{sk}\right).
\end{equation}
The modern idea of acceleration begins with~\citet{polyak1964some}, where his heavy-ball method (or momentum method) is proposed to locally accelerate the convergence rate based on the invariant manifold theorem from the field of dynamical systems. Then, one of the milestones is Nesterov's accelerated gradient descent (\texttt{NAG})
\[
\left\{\begin{aligned}
       & x_{k} = y_{k-1} - s\nabla f(y_{k-1}),              \\
       & y_{k} = x_{k}  + \frac{k-1}{k+r}(x_{k} - x_{k-1}),
       \end{aligned}\right.
\]   
with an initial point $x_0 = y_{0} \in \mathbb{R}^d$, of which the case $r=2$ is originally proposed by~\citet{nesterov1983method}. And the convergence rate is also globally accelerated to be\footnote{Actually, for the case $r>2$, the faster convergence rate has been discovered in~\citep{attouch2016rate} and~\citep{chen2022gradient}, respectively as
\[
f(x_k) - f(x^\star) \leq o\left( \frac{1}{sk^2}\right), \quad \text{or} \quad
\min_{0\leq i \leq k}f(x_i) - f(x^\star) \leq o\left( \frac{1}{sk^2}\right)
\]
} 
\begin{equation}
\label{eqn:1-over-k2}
f(x_k) - f(x^\star) \leq O\left( \frac{1}{sk^2}\right).
\end{equation}

\paragraph{Composite optimization} Although the deterministic gradient-based algorithms have obtained theoretical guarantees for smooth optimization, the objective function used in practice is often nonsmooth. More specifically, a class of objective functions with wide applications is the composite of two functions 
\begin{equation}
\label{eqn: composite}
\min_{x\in\mathbb{R}^d}\Phi(x) := f(x) + g(x),
\end{equation}
where $f$ is a smooth convex function and $g$ is a continuous convex function that is possibly not smooth. If $g$ is zero, the composite objective function $\Phi$ degenerates to the smooth function $f$. In other words, smooth optimization is a special case of composite optimization, where the nonsmooth term is identically zero. One particular example of composite optimization is the linear inverse problem with sparse representation, that is, the least square model (or linear regression) with $\ell_1$-regularization
\begin{equation}
\label{eqn: lasso}
\frac{1}{2} \|Ax - b \|^2 +\lambda \|x\|_1,\footnote{The rigorous definition of norms $\|\cdot\|$ and $\|\cdot\|_1$ are shown in~\Cref{subsec: notations-organization}.}
\end{equation}
where $A \in \mathbb{R}^{m\times d}$ is an $m\times d$ matrix,  $b \in \mathbb{R}^m$ is an $m$-dimensional vector and the regularization parameter $\lambda > 0$ is a tradeoff between fidelity to the measurements and noise sensitivity. Moreover, the composite objective function~\eqref{eqn: lasso} is widely applied in many fields, such as signal and image processing, statistical inference, geophysics,  astrophysics, and so on.~\Cref{fig: deblurring} shows an image deblurring problem with the original and observed (blurred and noisy) images of an elephant. Generally in the image deblurring problem, the matrix $A$ in the least square model with $\ell_1$-regularization~\eqref{eqn: lasso} is ill-conditioned~\citep{hansen2006deblurring}.
\begin{figure}[htpb!]
\centering
\begin{subfigure}[t]{0.48\linewidth}
\centering
\includegraphics[scale=0.5]{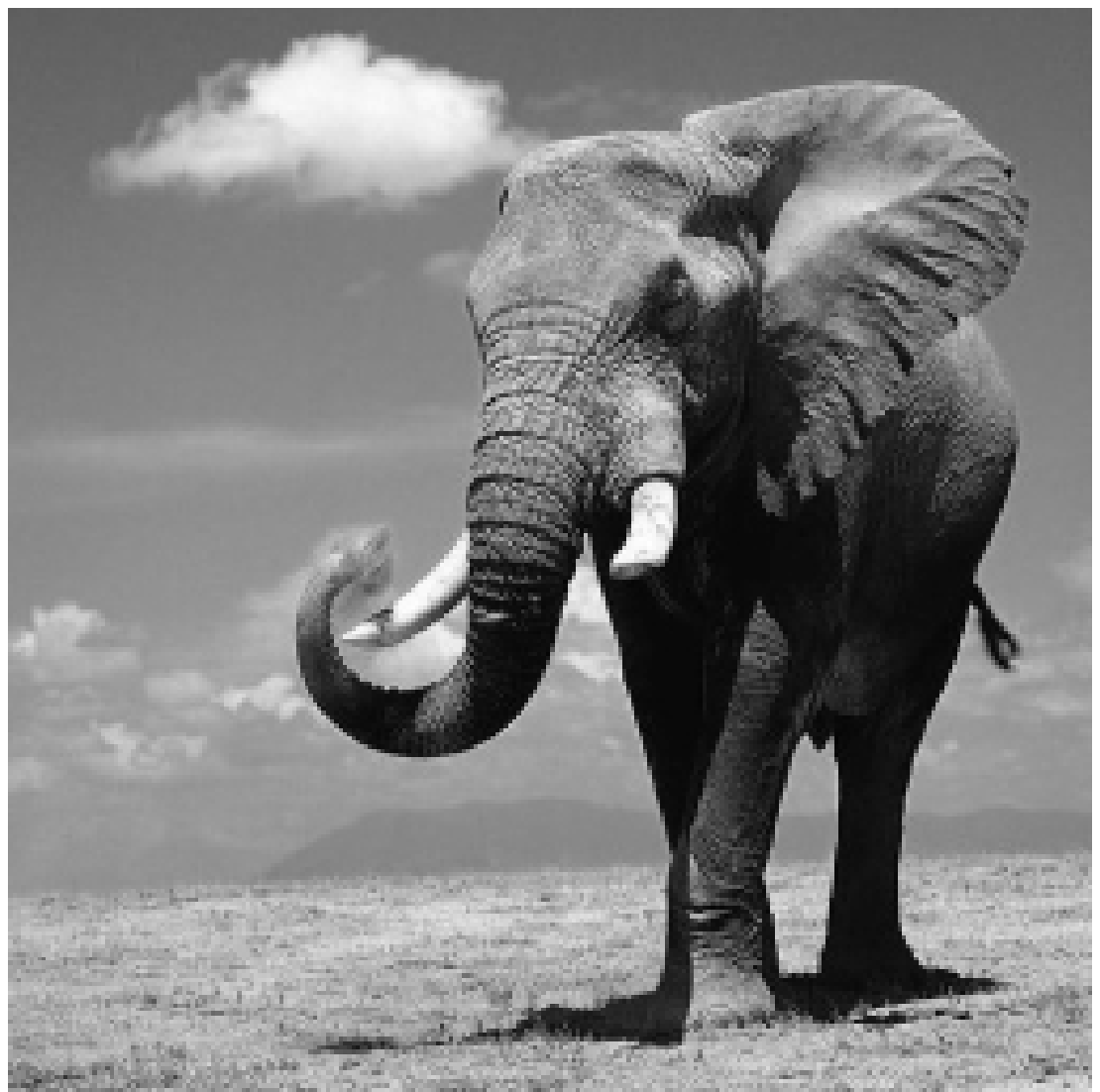}
\caption{Original}
\end{subfigure}
\begin{subfigure}[t]{0.48\linewidth}
\centering
\includegraphics[scale=0.5]{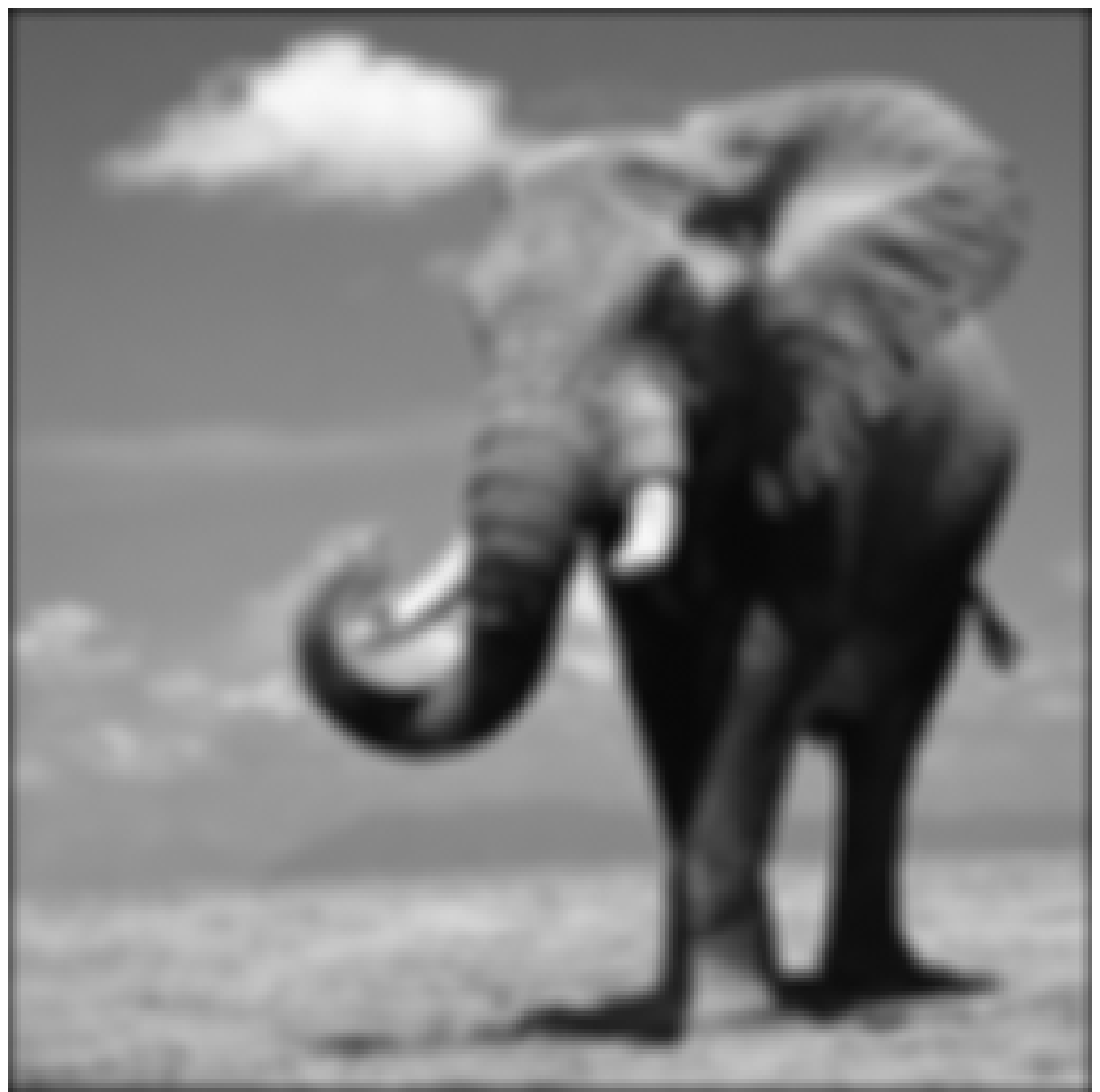}
\caption{Blurred and Noisy}
\end{subfigure}
\caption{The image deblurring problem: An image of an elephant}
\label{fig: deblurring}
\end{figure}

When the objective function is generalized from a smooth one to a composite one~\eqref{eqn: composite}, the vanilla gradient descent takes the general form by  introducing the $s$-proximal operator $P_s$\footnote{The $s$-proximal operator $P_s$ is defined rigorously by~\eqref{eqn: proximal-operator}  in~\Cref{subsec: notations-organization}.} as
\[
x_{k} = P_s\left(x_{k-1} - s\nabla f(y_{k-1})\right),
\]
which is named an iterative shrinkage-thresholding algorithm (ISTA).~\citet{beck2009fast} first provide a rigorous proof of ISTA's convergence rate, which is the same as the vanilla gradient descent~\eqref{eqn:1-over-k}. Furthermore, with the $s$-proximal operator $P_s$,~\texttt{NAG} can be generalized as
\[
\left\{\begin{aligned}
       & x_{k} = P_s\left(y_{k-1} - s\nabla f(y_{k-1})\right),              \\
       & y_{k} = x_{k}  + \frac{k-1}{k+r}(x_{k} - x_{k-1}),
       \end{aligned}\right.
\]
where the same convergence rate with~\texttt{NAG}~\eqref{eqn:1-over-k2} is also first proposed in~\citep{beck2009fast}. Hence, the $s$-proximal~\texttt{NAG} above is also called a fast iterative shrinkage-thresholding algorithm (FISTA). In~\Cref{fig: ista-fista}, we show the elephant's image in~\Cref{fig: deblurring} deblurred by ISTA and FISTA, where the parameters are set as: the regularization parameter $\lambda =10^{-6}$, the step size $s= 0.5$ and the iteration number $k=200$.

\begin{figure}[htpb!]
\centering
\begin{subfigure}[t]{0.48\linewidth}
\centering
\includegraphics[scale=0.5]{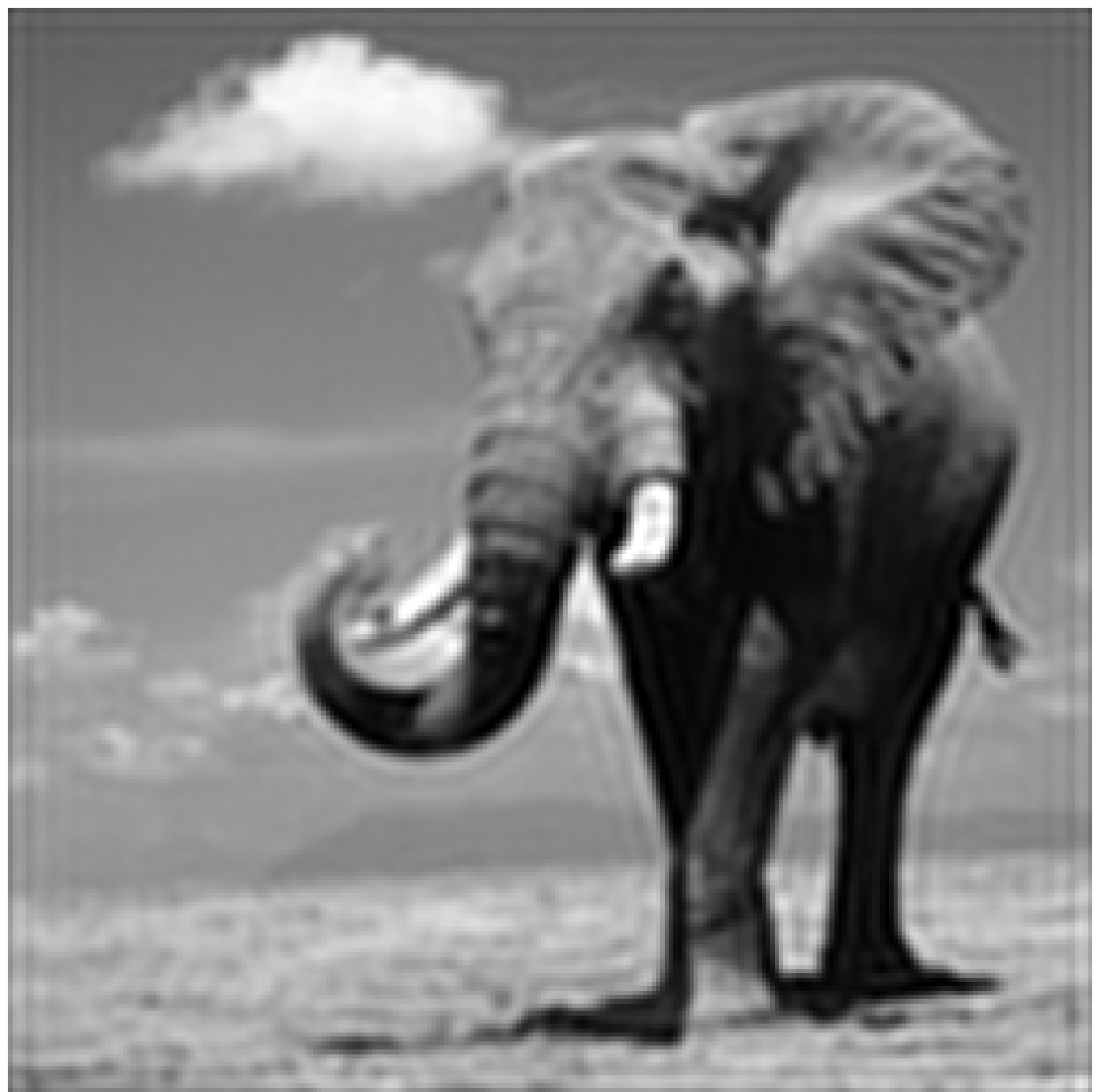}
\caption{ISTA}
\end{subfigure}
\begin{subfigure}[t]{0.48\linewidth}
\centering
\includegraphics[scale=0.5]{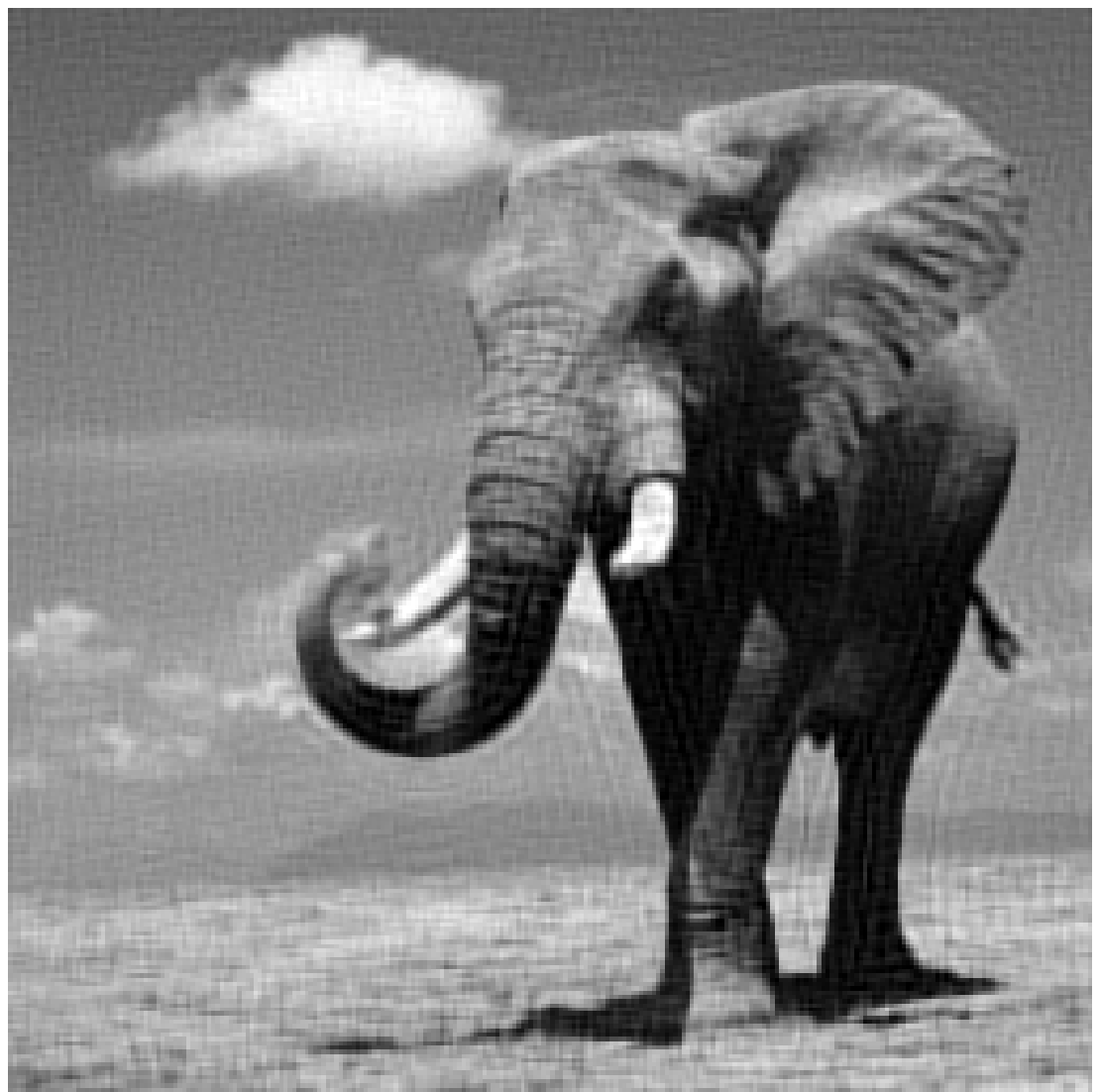}
\caption{FISTA}
\end{subfigure}
\caption{Deblurring of the elephant's image in~\Cref{fig: deblurring} by ISTA and FISTA.} 
\label{fig: ista-fista}
\end{figure}

Recently,~\citet{su2016differential} introduces the concept of $s$-proximal subgradient at $x \in \mathbb{R}^d$
\begin{equation}
\label{eqn: proximal-subgradient}
G_s(x) := \frac{x - P_s(x)}{s},
\end{equation}
which reduces ISTA and FISTA respectively to the forms of the vanilla gradient descent and the~\texttt{NAG} with the $s$-proximal subgradient $G_{s}(x_k)$ instead of the standard gradient $\nabla f(x_k)$. Moreover,~\citet{su2016differential} first uses the emerging Lyapunov analysis to obtain the convergence rate~\eqref{eqn:1-over-k2}, which is identified with that obtained in~\citep{beck2009fast}.

\subsection{Key observation}
\label{subsec: key-observation}

Recall the proofs for composite optimization in~\citep{beck2009fast, su2016differential}, the core ingredient is an inequality of the difference between $\Phi(y - sG_s(y))$ and $\Phi(x)$, shown in~\citep[Lemma 2.3]{beck2009fast} and~\citep[(22)]{su2016differential} respectively. Here in order to provide a clear picture for readers, we use the smooth objective function to demonstrate our key observation of how to improve the inequality of the difference between $\Phi(y - s\nabla f(y))$ and $\Phi(x)$, where $G_s(y)$ degenerates to $\nabla f(y)$ for smooth optimization. And then, the rigorous proof for the composite objective function is shown in~\Cref{sec: key-inequality}.

Given a smooth objective function $f$, then we separate the difference between $f(y - s\nabla f(y))$ and $f(x)$ into two parts as
\[
f(y - s\nabla f(y)) - f(x) =  \underbrace{f(y)- f(x)}_{\mathbf{I}}+  \underbrace{f(y - s\nabla f(y)) - f(y)}_{\mathbf{II}}. 
\]
\begin{itemize}
\item[(i)] For~\textbf{Part-I}, the conventional inequality used to estimate~\citep{beck2009fast, su2016differential} is the general convex inequality as
      \[
      \mathbf{I} = f(y)- f(x) \leq \left\langle \nabla f(y), y - x \right\rangle.  
      \] 
      By considering the condition of $L$-smoothness\footnote{The condition of $L$-smoothness is defined in~\Cref{subsec: notations-organization}.} further in~\citep{shi2021understanding, chen2022gradient},  the gradient norm minimization obtained is to improve the inequality of \textbf{Part-I} tighter as        
      \[
      \mathbf{I}= f(y)- f(x) \leq \left\langle \nabla f(y), y - x \right\rangle - \frac{1}{2L} \left\|\nabla f(y) - \nabla f(x) \right\|^2. 
      \] 
      However, the tighter inequality is fully dependent on the $L$-smooth condition, which cannot be generalized to the composite optimization.

\item[(ii)] Hence, we have to consider the inequality estimate of~\textbf{Part-II}. With the conditions of convexity and $L$-smoothness, we can estimate~\textbf{Part-II} as
\[
\mathbf{II} = f(y - s\nabla f(y)) - f(y) \leq - \left(s - \frac{Ls^2}{2}\right) \| \nabla f(y) \|^2.
\]
In~\citep[Lemma 2.3]{beck2009fast}, the step size is set as $s = 1/L$ and then $s- Ls^2/2 = 1/(2L)$. In~\citep[(22)]{su2016differential}, the Lipschitz constant is set as $L = 1/s$ and then $s- Ls^2/2 = s/2$. Actually, the step size $s$ is not necessarily set to be a constant $1/L$ but belongs to a range, that is, $s \in (0,1/L]$. Based on the key observation, we will provide a new proof of gradient norm minimization based on the Lyapunov function~\citep{su2016differential} and generalize it to the composite optimization. 
\end{itemize}

Combining~\textbf{Part-I} and~\textbf{Part-II}, we obtain the estimate of the difference between between $f(y - s\nabla f(y))$ and $f(x)$ as
\[
f(y - s\nabla f(y)) - f(x) \le \left\langle\nabla f(y), y - x \right\rangle  - \left(s - \frac{Ls^2}{2}\right) \| \nabla f(y) \|^2.
\] 
The generalization to the composite objective function is shown in~\Cref{sec: key-inequality}.

\subsection{Overview of contributions}
\label{subsec: overview-contribution}

In this paper, we study further the convergence rate of ISTA and FISTA using the techniques, phase-space representation and Lyapunov function, based 
on the high-resolution differential equation framework~\citep{shi2021understanding, chen2022gradient} and make contributions majorly about the subgradient norm minimization. Previously, only the convergence rate of objective values is shown in~\citep{beck2009fast, su2016differential}.


\paragraph{Proximal subgradient norm minimization of ISTA}
Recall the discrete Lyapunov function constructed in~\citep[Theorem A.6]{shi2019acceleration} is used to obtain the convergence rate in the gradient norm square. Based on the key observation of a pivotal inequality, we obtain the proximal subgradient norm minimization of ISTA converges as
\[
\min_{0\leq i \leq k}\|G_s(x_k)\|^2    \leq      o\left(\frac{1}{k^2}\right)  
\] 
for any $0 < s \leq 1/L$. 

\paragraph{Proximal subgradient norm minimization of FISTA}
Based on the key observation of a pivotal inequality, we take the phase-space representation technique from the high-resolution differential equation framework~\citep{shi2021understanding} into the discrete Lyapunov function constructed in~\citep[(22)]{su2016differential}. And then, we dig out the convergence rate of FISTA's squared proximal subgradient norm as

\[
\min_{0\leq i \leq k}\|G_s(y_k)\|^2    \leq      o\left(\frac{1}{k^3}\right)  
\] 
for any $0 < s <1/L$, by the gradient-correction scheme and the implicit-velocity scheme~\citep{chen2022gradient}, respectively.  Similarly, we also point out that the convergence rate of the objective values proposed in~\citep{su2016differential} can be improved to a faster rate as
\[
\min_{0\leq i \leq k} \Phi(x_i) - \Phi(x^\star) \leq  o\left(\frac{1}{k^2}\right)
\]
for any $0 < s \leq 1/L$. This is also available in practice and far more straightforward than the derivation in~\citep{attouch2016rate}.

\paragraph{Advantages of proximal subgradient norm minimization} Besides the merit that the proximal subgradient norm square converges faster than the objective values, there are still two advantages of the proximal subgradient norm beyond the objective values in practice. The first one is that 
the global minimum $\Phi(x^\star)$ is practically unknown so that the convergence rate with the ordinate ($y$-axis) as the logarithmic scale is always observed by $\log \Phi(x_i)$ instead of $\log(\Phi(x_i) - \Phi(x^\star) )$. This phenomenon leads to the iterative curve converging to the constant $\log \Phi(x^\star)$ other than $-\infty$, which fails to characterize the property of convergence, e.g., linear or sublinear convergence. However, the proximal subgradient norm $\|G_s(x_k)\|$ or $\|G_s(y_k)\|$ always converges to zero, where the log-scale ordinate problem above does not exist.  Moreover, the second one is  the stop criteria, which is generally adopted for the objective values by $\Phi(x_{k+1})- \Phi(x_{k}) < \epsilon$, where the parameter $\epsilon > 0$ is a small number. Moreover, the convergence rate of  $\Phi(x_{k+1})- \Phi(x_{k})$ in practice is hard to be estimated, so we can only empirically try to adjust the parameter $\epsilon$. However, for the proximal subgradient norm, we can directly estimate to take about $O(1/\epsilon^{\frac23})$ iterative times  into the stop criteria $\|G_s(x_k)\| < \epsilon$ or $\|G_s(y_k)\| < \epsilon$.  
%
%

\subsection{Notations and Organization}
\label{subsec: notations-organization}

In this paper, we follow the notions of~\citep{nesterov2003introductory} with slight modifications. Let $\mathcal{F}^0(\mathbb{R}^d)$ be the class of continuous convex functions defined on $\mathbb{R}^d$; that is, $g \in \mathcal{F}^0$ if $g(y) \geq g(x) + \langle \nabla g(x), y - x\rangle$ for any $x, y \in \mathbb{R}^d$. The function class $\mathcal{F}^1_{L}(\mathbb{R}^d)$ is the subclass of $\mathcal{F}^0(\mathbb{R}^d)$ with the $L$-smooth gradient; that is, if $f \in \mathcal{F}^1_{L}(\mathbb{R}^d)$, its gradient is $L$-smooth in the sense that 
\[
\| \nabla f(x) - \nabla f(y) \| \leq L \|x - y\|,
\]
where $\|\cdot\|$ denotes the standard Euclidean norm and $L > 0$ is the Lipschitz constant. (Note that this implies that $\nabla f$ is also $L'$-Lipschitz for any $L' \geq L$.) For any $g \in \mathcal{F}^0(\mathbb{R}^d)$, we call the vector $v \in \mathbb{R}^d$ a subgradient of $g$ at $x \in \mathbb{R}^d$ if it satisfies $g(y) \geq g(x) + v^T(y - x)$ for any $y \in \mathbb{R}^d$. And the set of all subgradients of $g \in \mathcal{F}^0(\mathbb{R}^d)$ at $x\in \mathbb{R}^d$ is called the subdifferential of $g$ at $x\in \mathbb{R}^d$ as
\[
\partial g(x) := \left\{ v \in \mathbb{R}^d: g(y) \geq g(x) + v^T(y - x),\;\forall y \in \mathbb{R}^d \right\}.
\]
Finally, we give the definition of the $s$-proximal operator~\citep{beck2009fast, su2016differential}. For any $f\in \mathcal{F}_L^1$ and $g \in \mathcal{F}^0$, the $s$-proximal operator is defined as
\begin{equation}
\label{eqn: proximal-operator}
    P_s(x) := \mathop{\arg\min}_{z\in\mr^d}\left\{ \frac{1}{2s}\left\| z - \left(x - s\nabla f(x)\right) \right\|^2 + g(z) \right\},
\end{equation}
for any $x \in \mathbb{R}^d$. For the special case, when $g$ is the $\ell_1$-norm, that is, $g(x) = \lambda \|x\|_1$,\footnote{For any vector $x \in \mathbb{R}^d$, its $\ell_1$-norm $\|\cdot\|_1$ is defined as \[\|x\|_1 = \sum_{i=1}^{d}|x_i|.\]} we can obtain the closed-form expression of the $s$-proximal operator~\eqref{eqn: proximal-operator} as
\[
P_s(x)_i = \big(\left|\left(x - s\nabla f(x)\right)_i\right| - \lambda s\big)_+ \text{sgn}\big(\left(x - s\nabla f(x)\right)_i\big),
\]
where $i=1,\ldots,d$ is the index.

The remainder of this paper is organized as follows. In~\Cref{sec: related}, some related research works are described. We rigorously prove a pivotal inequality for composite functions in~\Cref{sec: key-inequality}. The proximal subgradient minimization of ISTA and FISTA are shown in~\Cref{sec: nonaccele} and~\Cref{sec: accele}, respectively. Moreover, we demonstrate the proximal subgradient minimization of FISTA by both the gradient-correction scheme and the implicit-velocity scheme in~\Cref{sec: accele}. In~\Cref{sec: discussion}, we propose a conclusion and discuss some further research works.

%
%
%
%
%
%
%
%
%
%
%

\section{Related works}
\label{sec: related}

Since the acceleration phenomenon was discovered by~\citet{nesterov1983method}, it has been an important topic to explore the cause leading to the acceleration in the first-order optimization. Until recently, the cause has not been successfully uncovered by the gradient correction term or its equivalent implicit-velocity form~\citep{shi2021understanding, chen2022gradient}. In other words, a theoretical guarantee is well-established for the convex smooth unconstrained optimization. A detailed review with many emerging perspectives is shown in~\citep{jordan2018dynamical}. However, there has not been a complete theoretical system for composite optimization widely used in practice, e.g., linear inverse problems with sparse representation in signal and image processing, statistical inference, geophysics, astrophysics, etc. The study on the convergence rate of composite objective functions starts from~\citep{beck2009fast} for the linear inverse problem with sparse representation.~\citet{su2016differential} introduce the technique of Lyapunov analysis and simplify the proof in~\citep{beck2009fast}. 

The least-square model with $\ell_1$-regularization is used widely in the linear inverse problem and is a classical statistical model performing the sparse variable selection in matching pursuit, which is named least absolute shrinkage and selection operator, shortened as Lasso~\citep{tibshirani1996regression}. Afterward, the $\ell_1$-norm also is used in basis pursuit~\citep{chen2001atomic}. Perhaps, one of the significant applications for $\ell_1$-norm related to optimizations is compressed sensing, which is a signal processing technique to obtain solutions to underdetermined linear systems for efficiently acquiring and reconstructing a signal. The famous Nyquist-Shannon theory~\citep{nyquist1928certain, shannon1949communication} says that the $\ell_1$-norm optimization can exploit the sparsity of a signal to recover it using far fewer samples than required. 
Recently, the theory about compressed sensing has been developed further in~\citep{donoho2006compressed, donoho2006most,candes2006stable, candes2006robust}. More specially, when the knowledge of a signal's sparsity is known, the number of samples can be reduced further. Meanwhile, the characteristics of $\ell_1$-norm for the sparsity are generalized to its matrix version --- nuclear norm, which is used for matrix completion, e.g., the Netflix problem~\citep{candes2012exact}.

Another venerable line of works combining the low-resolution differential equation and the continuous limit of the Newton method to design and analyze new algorithms is proposed in~\citep{alvarez2000minimizing, attouch2012second, attouch2014dynamical,  attouch2016fast, attouch2016rate, attouch2020first, attouch2022fast, attouch2022ravine}, of which the terminology
is called inertial dynamics with a Hessian-driven term. Although the inertial dynamics with a
Hessian-driven term resembles closely with the high-resolution differential equations in~\citep{shi2021understanding}, it is important to note that the Hessian-driven terms are from the second-order information of Newton’s method~\citep{attouch2014dynamical}, while the gradient correction entirely relies on the first-order information of Nesterov's accelerated gradient descent method itself. 

\section{A pivotal inequality for composite optimization}
\label{sec: key-inequality}

In this section, we provide a refined inequality for~\citep[Lemma 2.3]{beck2009fast} and~\citep[(22)]{su2016differential}, based on a meticulous observation of the $s$-proximal operator $P_s(\cdot)$. Here, we use the step size $s \in (0,1/L)$ instead of $1/L$ to emphasize the subscript of $P_s$,  which is pivotal for the composite objective function to bring about the proximal subgradient norm minimization and its acceleration.

\begin{lemma}
\label{lem: key-inequality}
Let $\Phi = f + g$ be a composite function with $f \in \mathcal{F}_{L}^1(\mathbb{R}^d)$ and $g \in \mathcal{F}^0(\mathbb{R}^d)$. Then, the composite function $\Phi$ satisfies the following inequality 
    \begin{equation}
    \label{eqn: key-inequality}
        \Phi(y - sG_s(y)) \le \Phi(x) + \langle G_s(y), y - x\rangle - \left(s - \frac{s^2 L}{2}\right)\|G_s(y)\|^2.
    \end{equation}
\end{lemma}

\begin{proof}
Similar to the proof in~\citep[Lemma 2.3]{beck2009fast}, we first define the quadratic approximation of $\Phi(x)=f(x)+g(x)$ at a given point $y \in \mathbb{R}^d$ for any step size $0 < s \leq 1/L$: 
\begin{equation}
\label{eqn: quadratic-approx}  
Q_s(x, y) := f(y) + \langle\nabla f(y),  x - y\rangle + \frac{1}{2s}\|x - y\|^2 + g(x).
\end{equation}
Some basic calculations tell us that the proximal value at the given point $y$ is the unique minimizer of the quadratic approximation~\eqref{eqn: quadratic-approx} as
\[
P_s(y) = \mathop{\argmin}_{x\in \mathbb{R}^d}Q_s(x, y),
\]
that is, there exists a subgradient $\gamma(y) \in \partial g(P_s(y))$ such that the optimal condition is satisfied as
\begin{equation}
\label{eqn: prox-optimality}
\nabla f(y) + \frac{1}{s} \big( P_s(y) - y \big) + \gamma(y) = 0.
\end{equation}

Furthermore, since the convex function $f$ is $L$-smooth, it follows that
\[
\Phi(x) = f(x) + g(x) \leq   f(y) + \langle \nabla f(y), x - y\rangle + \frac{L}{2}\|x - y\|^2 + g(x)\leq Q_{\frac1L}(x,y), 
\] 
for any $x,y \in \mathbb{R}^d$. Substituting $x$ by the proximal value $P_s(y)$, we can obtain
\begin{equation}
\label{eqn: prox-inequality}
\Phi(x) - \Phi(P_s(y)) \ge \Phi(x) - Q_{\frac{1}{L}}(P_s(y), y).
\end{equation}

Finally, by the convexity of both $f(x)$ and $g(x)$,  the following two inequalities 
    \begin{subequations} 
    \begin{align}
        & f(x) \ge f(y) + \langle\nabla f(y), x- y\rangle,               \label{eqn: smooth-convex}    \\
        & g(x) \ge g(P_s(y)) + \langle \gamma(y), x - P_s(y)\rangle,     \label{eqn: nonsmooth-convex}  
    \end{align}
hold for any $x, y \in \mathbb{R}^d$. 
   \end{subequations}
Summing the two inequalities above,~\eqref{eqn: smooth-convex} and~\eqref{eqn: nonsmooth-convex}, yields 
    \begin{equation}
    \label{eqn: convex-inequality}  
    \Phi(x) = f(x) + g(x) \ge f(y) + \langle \nabla f(y), x - y\rangle + g(P_s(y)) + \langle \gamma(y), x - P_s(y)\rangle.
    \end{equation}
From the two inequalities~\eqref{eqn: prox-inequality} and~\eqref{eqn: convex-inequality}, we have
\begin{equation}
\label{eqn: final-key-1}
\Phi(x) - \Phi(P_s(y))\ge f(y) + \langle \nabla f(y), x - y\rangle + g(P_s(y)) + \langle \gamma(y), x - P_s(y)\rangle - Q_{\frac{1}{L}}(P_s(y), y).
\end{equation}
With the definition of the quadratic approximation~\eqref{eqn: quadratic-approx}, inequality~\eqref{eqn: final-key-1} can be calculated as
\begin{equation}
\label{eqn: final-key-2}
\Phi(x) - \Phi(P_s(y))\ge  \langle \nabla f(y) + \gamma(y), x - P_s(y)\rangle - \frac{L}{2} \left\| P_s(y) - y\right\|^2.
\end{equation}
Putting the optimal condition~\eqref{eqn: prox-optimality} into~\eqref{eqn: final-key-2}, we have
\begin{align*}
\Phi(x) - \Phi(P_s(y)) & \ge  \frac{1}{s}\langle y - P_s(y), x - P_s(y)\rangle - \frac{L}{2} \left\| P_s(y) - y\right\|^2 \\
                       & =  \left\langle G_s(y), x - y\right\rangle  + \left( s - \frac{Ls^2}{2}\right) \left\| G_s(y)\right\|^2,
\end{align*}
where the last equality follows from the definition of the $s$-proximal subgradient~\eqref{eqn: proximal-subgradient} directly. Hence, the proof is complete.

\end{proof}

%
%
%
%

\section{Proximal subgradient norm minimization of ISTA}
\label{sec: nonaccele}

In this section, we investigate the convergence rate of ISTA for composite optimization in both the objective value and the proximal subgradient norm square. With the definition of the $s$-proximal subgradient~\eqref{eqn: proximal-subgradient}, ISTA can be rewritten as
\begin{equation}
\label{eqn: ista-gs}
x_{k} = x_{k-1} - sG_s(x_{k-1}). 
\end{equation}
From~\citep[Theorem A.6]{shi2019acceleration}, the discrete Lyapunov function for the vanilla gradient descent is constructed as
\[
    \E(k) = sk(f(x_{k}) - f(x^\star)) + \frac{1}{2}\|x_k - x^\star\|^2.
\]
Here, for the discrete Lyapunov function of ISTA, we use the composite function $\Phi(x)$ instead of the smooth function $f(x)$  as
\begin{equation}
\label{eqn: lyapunov-ista}
    \E(k) = sk(\Phi(x_{k}) - \Phi(x^\star)) + \frac{1}{2}\|x_k - x^\star\|^2.
\end{equation} 
With the discrete Lyapunov function~\eqref{eqn: lyapunov-ista}, we show the convergence rate of both the objective value and the gradient norm square in the following theorem.

\begin{theorem}
\label{thm: ista}
Let $\Phi = f + g$ be a composite function with $f \in \mathcal{F}_{L}^1(\mathbb{R}^d)$ and $g \in \mathcal{F}^0(\mathbb{R}^d)$. Taking any step size $0 < s \leq 1/L$, the iterative sequence $\{x_k\}_{k=0}^{\infty}$ generated by ISTA satisfies
    \begin{equation}
    \label{eqn: ista-s}
 \Phi(x_k) - \Phi(x^\star) \leq \frac{\|x_0 - x^\star\|^2}{2sk}, \quad \min_{0\leq i \leq k} \| G_s(x_i) \|^2 \leq \frac{2\|x_0 - x^\star\|^2}{3s^2 k(k+1)};   
    \end{equation}
Furthermore, the iterative sequence $\{x_k\}_{k=0}^{\infty}$ also satisfies
\begin{equation}
\label{eqn: ista-s1}
\lim_{k\rightarrow \infty} \left[k(k+1)\min_{0\leq i \leq k} \| G_s(x_i) \|^2\right] = 0.
\end{equation}
\end{theorem}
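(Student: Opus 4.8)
The plan is to extract all three claims from a single fact: that the discrete Lyapunov function $\E(k)$ in \eqref{eqn: lyapunov-ista} is nonincreasing. To establish this I would apply \Cref{lem: key-inequality} at $y = x_k$ twice --- once with $x = x^\star$ to control $\Phi(x_{k+1}) - \Phi(x^\star)$, and once with $x = x_k$ to control the one-step decrease $\Phi(x_{k+1}) - \Phi(x_k)$ --- and expand $\tfrac12\|x_{k+1} - x^\star\|^2 - \tfrac12\|x_k - x^\star\|^2$ using $x_{k+1} = x_k - sG_s(x_k)$. The crucial simplification is that the cross terms $\langle G_s(x_k), x_k - x^\star\rangle$ cancel exactly, leaving
\[
\E(k+1) - \E(k) \le -sk\Bigl(s - \tfrac{s^2L}{2}\Bigr)\|G_s(x_k)\|^2 - \tfrac{s^2}{2}(1 - sL)\|G_s(x_k)\|^2 .
\]
Since $0 < s \le 1/L$ forces $s - \tfrac{s^2L}{2} \ge \tfrac{s}{2} > 0$ and $1 - sL \ge 0$, both coefficients are nonpositive, so $\E$ is nonincreasing. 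Combining $sk(\Phi(x_k) - \Phi(x^\star)) \le \E(k) \le \E(0) = \tfrac12\|x_0 - x^\star\|^2$ then gives the objective-value bound in \eqref{eqn: ista-s}.

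Next I would telescope the same inequality. Discarding the (nonpositive) second term and summing over $i = 0, \dots, k$, the bound $\E(k+1) \ge 0$ yields
\[
\frac{s^2}{2}\sum_{i=0}^{k} i\,\|G_s(x_i)\|^2 \;\le\; \E(0) - \E(k+1) \;\le\; \frac{1}{2}\|x_0 - x^\star\|^2 .
\]
Dividing by $\sum_{i=0}^{k} i = k(k+1)/2$ and lower-bounding the sum by $\bigl(\min_{0\le i\le k}\|G_s(x_i)\|^2\bigr)\sum_{i=0}^k i$ produces the second bound in \eqref{eqn: ista-s} up to a constant; pinning down the precise constant requires a sharper accounting of the coefficients above and of the summation, which I do not pursue here.

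The limit \eqref{eqn: ista-s1} needs one extra observation: the displayed telescoped inequality holds for every $k$ with a right-hand side bounded independently of $k$, so, the summands being nonnegative, the series $\sum_{i=0}^{\infty} i\,\|G_s(x_i)\|^2$ converges. Writing $b_N := \sum_{i=N}^{\infty} i\,\|G_s(x_i)\|^2$, convergence gives $b_N \to 0$. I would then apply the standard windowing estimate with $N_k := \lfloor k/2\rfloor$:
\[
\Bigl(\min_{0\le i\le k}\|G_s(x_i)\|^2\Bigr)\sum_{i=N_k}^{k} i \;\le\; \sum_{i=N_k}^{k} i\,\|G_s(x_i)\|^2 \;\le\; b_{N_k} .
\]
Because $\sum_{i=N_k}^{k} i \ge (k - N_k + 1)N_k = \Theta(k^2)$ while $b_{N_k} \to 0$ as $k \to \infty$, this forces $k^2\min_{0\le i\le k}\|G_s(x_i)\|^2 \to 0$, which is exactly \eqref{eqn: ista-s1}.

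I expect the conceptual crux to be the third step: recognizing that the object to track is the summability of the weighted series $\sum_i i\,\|G_s(x_i)\|^2$ rather than any fixed-$k$ estimate, after which the little-o rate is forced by trading the vanishing tail $b_{N_k}$ against the $\Theta(k^2)$ growth of $\sum_{i=N_k}^{k} i$. The sign analysis in the first step, though it is the technical heart of the argument, is essentially delivered by \Cref{lem: key-inequality} once the cross terms are seen to cancel, and the fixed-$k$ rate in the second step is then immediate up to the constant.
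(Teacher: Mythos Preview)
Your approach is essentially the paper's: the same Lyapunov function \eqref{eqn: lyapunov-ista}, the same two applications of \Cref{lem: key-inequality} (at $x=x_k$ and at $x=x^\star$), and the same telescoping, with your windowing argument for \eqref{eqn: ista-s1} being precisely the ``basic calculations'' the paper leaves implicit. On the constant you flag: your one-step decrease is the correct one and, after using $sL\le 1$, yields $\E(k+1)-\E(k)\le -\tfrac{ks^2}{2}\|G_s(x_k)\|^2$, hence constant $2$ rather than $2/3$ in the second bound of \eqref{eqn: ista-s}; the paper's $2/3$ stems from the displayed inequality \eqref{eqn: convex-inq-1} with coefficient $2s-\tfrac{s^2L}{2}$, which appears to be a typo for $s-\tfrac{s^2L}{2}$, so no ``sharper accounting'' on your side will recover it.
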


\begin{proof}
With the discrete Lyapunov function~\eqref{eqn: lyapunov-ista}, we calculate the iterative difference between $\mathcal{E}(k+1)$ and $\mathcal{E}(k)$ as 
\begin{align}
 \E&(k+1) - \E(k) \nonumber\\
        &= sk(\Phi(x_{k+1}) - \Phi(x_k)) + s(\Phi(x_{k+1}) - \Phi(x^\star)) + \frac{1}{2}\inner{x_{k+1} - x_k, x_{k+1} + x_k - 2x^\star} \nonumber\\
        & = sk(\Phi(x_{k+1}) - \Phi(x_k)) + s(\Phi(x_{k+1}) - \Phi(x^\star)) - s\inner{G_s(x_k), x_k - x^\star} + \frac{s^2}{2}\norm{G_s(x_k)}^2. \label{eqn: diff-lyapunov}
    \end{align}
With~\Cref{lem: key-inequality},  we obtain the following two inequalities by putting the equivalent form of ISTA~\eqref{eqn: ista-gs} into the pivotal inequality~\eqref{eqn: key-inequality}, as
\begin{subequations}
\begin{align}
& \Phi(x_{k+1}) - \Phi(x_k) \le - \left(2s - \frac{s^2 L}{2}\right)\norm{G_s(x_k)}^2,\label{eqn: convex-inq-1}\\
& \Phi(x_{k+1}) - \Phi(x^\star) \le \inner{G_s(x_k), x_k - x^\star} - \left(s - \frac{s^2 L}{2}\right)\norm{G_s(x_k)}^2 \label{eqn: convex-inq-2}.
\end{align}
\end{subequations}
Substituting the two inequalites~\eqref{eqn: convex-inq-1} and~\eqref{eqn: convex-inq-2} into~\eqref{eqn: diff-lyapunov}, we obtain the iterative difference between $\mathcal{E}(k+1)$ and $\mathcal{E}(k)$ as 
    \begin{align*}
        \E(k+1) - \E(k) &\le - \frac{s^2}{2}\left[ 2(2k+1) - (k+1)sL\right]\norm{G_s(x_k)}^2 + \frac{s^2}{2}\norm{G_s(x_k)}^2\\
        & \le -\frac{3ks^2}{2}\norm{G_s(x_k)}^2,
    \end{align*}
where the last inequality follows from $sL \leq 1$. Hence, we complete the proof with some basic calculations.
\end{proof}

We can find that the step size $s=1/L$ is available in the proof of the ISTA's proximal subgradient norm minimization above. In other words, we can also obtain the ISTA's proximal subgradient norm minimization above by the inequality in~\citep[Lemma 2.3]{beck2009fast} and~\citep[(22)]{su2016differential}. Hence, we conclude this section with the following corollary by setting the step size $s = 1/L$.
\begin{corollary}
\label{coro: ista}
When the step size is set $s = 1/L$,  the iterative sequence $\{x_k\}_{k=0}^{\infty}$ generated by ISTA satisfies
    \begin{equation}
    \label{eqn: ista-l}
 f(x_k) - f(x^\star) \leq \frac{L\|x_0 - x^\star\|^2}{2k}, \quad \min_{0\leq i \leq k} \| G_s(x_i) \|^2 \leq \frac{2L^2\|x_0 - x^\star\|^2}{3 k(k+1)}.   
    \end{equation}
\end{corollary}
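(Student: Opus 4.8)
The plan is to derive the corollary as a direct specialization of~\Cref{thm: ista} to the endpoint step size $s = 1/L$, so the only real content is a substitution followed by elementary arithmetic. The first thing I would verify is that $s = 1/L$ is actually admissible. \Cref{thm: ista} is stated for every $0 < s \leq 1/L$, and the single place its proof invokes a constraint on $s$ is the final inequality $sL \leq 1$, which holds with equality at $s = 1/L$. Hence both bounds in~\eqref{eqn: ista-s} remain valid at the endpoint, and no fresh argument is needed beyond checking this membership.

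Next I would substitute $s = 1/L$ into the two inequalities of~\eqref{eqn: ista-s}. For the objective-value bound, replacing $1/s$ by $L$ in $\|x_0 - x^\star\|^2/(2sk)$ gives $L\|x_0 - x^\star\|^2/(2k)$; for the subgradient-norm bound, using $s^2 = 1/L^2$ turns the factor $1/s^2$ into $L^2$, producing $2L^2\|x_0 - x^\star\|^2/\big(3k(k+1)\big)$. These are precisely the two estimates asserted in~\eqref{eqn: ista-l}. I note that the corollary writes $f$ where the theorem writes $\Phi$; I would either keep $\Phi$ throughout for consistency or flag this as a cosmetic relabeling, since the inequality being specialized is the one on the composite objective and the substitution does not touch which function appears.

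The hard part, such as it is, amounts only to the admissibility check above: once $s = 1/L$ is confirmed to lie in the range covered by~\Cref{thm: ista}, both inequalities drop out immediately. As the surrounding discussion already observes, one could alternatively rebuild these two bounds from scratch using the classical inequality of~\citep[Lemma 2.3]{beck2009fast} or~\citep[(22)]{su2016differential}, but routing through~\Cref{thm: ista} is the cleanest path and renders the corollary an immediate consequence rather than an independent derivation.
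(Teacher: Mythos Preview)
Your proposal is correct and matches the paper's approach exactly: the paper does not supply a separate proof for the corollary but simply notes that it follows by setting $s = 1/L$ in \Cref{thm: ista}. Your observation about the $f$ versus $\Phi$ relabeling is apt---that is indeed a cosmetic slip in the statement rather than a substantive difference.
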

\section{Proximal subgradient norm minimization of FISTA}
\label{sec: accele}

In this section, we investigate the convergence rate of FISTA for composite optimization in both the objective value and the gradient norm square. With the definition of the $s$-proximal subgradient~\eqref{eqn: proximal-subgradient}, FISTA can be rewritten as
\begin{equation}
\label{eqn: fista-gs}
\left\{ 
\begin{aligned}
    &x_{k} = y_{k-1} - sG_s(y_{k-1}),\\
    &y_{k} = x_{k} + \frac{k-1}{k+r}(x_{k} - x_{k-1}),
\end{aligned}
\right.
\end{equation}
with $r\ge 2$. Here, we provide a discrete Lyapunov function based on~\citep{su2016differential} with a slight modification as 
\begin{equation}
\label{eqn: lyapunov-fista}
 \mathcal{E}(k) = sk(k+r)(\Phi(x_k) - \Phi(x^\star)) + \frac{1}{2}\|\sqrt{s}(y_{k}-x_{k}) + r(y_k - y^\star)\|^2.
\end{equation} 
With the discrete Lyapunov function above~\eqref{eqn: lyapunov-fista}, we show the convergence rate of both the objective value and the gradient norm square in the following theorem.

\begin{theorem}
\label{thm: fista}
Let $\Phi = f + g$ be a composite function with $f \in \mathcal{F}_{L}^1(\mathbb{R}^d)$ and $g \in \mathcal{F}^0(\mathbb{R}^d)$. Taking any step size $0 < s \leq 1/L$, the iterative sequence $\{x_k\}_{k=0}^{\infty}$ generated by FISTA satisfies
    \begin{equation}
    \label{eqn: fista-obj}
 \Phi(x_k) - \Phi(x^\star) \leq \frac{r^2\|x_0 - x^\star\|^2}{2s(k+1)(k+r+1)};   
    \end{equation}
if the step size satisfies $0< s< 1/L$, the iterative sequence $\{y_k\}_{k=0}^{\infty}$ satisfies
\begin{equation}
\label{eqn: fista-grad}
\left\{\begin{aligned}
& \min_{0\leq i \leq k} \| G_s(y_i) \|^2 \leq \frac{6r^2\|x_0 - x^\star\|^2}{s^2(1-sL) (k+1)\left(2k^2 + (6r+1)k + 6r^2 \right)},\\
& \lim_{k\rightarrow \infty} \left(k^3\min_{0\leq i \leq k} \| G_s(y_i) \|^2\right) = 0.
\end{aligned} \right.
\end{equation}
Furthermore, if $r>2$, the iterative sequence $\{x_k\}_{k=0}^{\infty}$ also satisfies
\begin{equation}
\label{eqn: fista-obj-fast}
\lim_{k\rightarrow \infty} \left[k^2 \left(\min_{0\leq i \leq k} \Phi(x_k) - \Phi(x^\star)\right)\right] = 0.
\end{equation}
\end{theorem}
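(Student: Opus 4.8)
The plan is to run a Lyapunov argument with the potential \eqref{eqn: lyapunov-fista}, whose entire content I would distill into a single one-step dissipation inequality
\[
\mathcal{E}(k+1) - \mathcal{E}(k) \le -\frac{s^2(1-sL)}{2}(k+r)^2\|G_s(y_k)\|^2 - s(r-2)\,w_k\,\big(\Phi(x_{k+1}) - \Phi(x^\star)\big),
\]
where $w_k \ge 0$ is a weight of order $k$ (needed only for the $r>2$ refinement). Everything in \eqref{eqn: fista-obj}, \eqref{eqn: fista-grad} and \eqref{eqn: fista-obj-fast} is a bookkeeping consequence of this estimate together with the initial value $\mathcal{E}(0) = \tfrac{r^2}{2}\|x_0 - x^\star\|^2$, which follows from $y_0 = x_0$.

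To establish the dissipation inequality I would first split the potential difference into a ``discrete-time-derivative of the potential energy'' term $sk(k+r)(\Phi(x_{k+1}) - \Phi(x_k))$, a ``new mass'' term $s(2k+1+r)(\Phi(x_{k+1}) - \Phi(x^\star))$, and a kinetic difference $\tfrac12(\|v_{k+1}\|^2 - \|v_k\|^2)$, where $v_k$ denotes the vector inside the norm in \eqref{eqn: lyapunov-fista}. To the first two $\Phi$-differences I apply \Cref{lem: key-inequality} with $y = y_k$ and $x = x_k$, $x = x^\star$ respectively, using $x_{k+1} = y_k - sG_s(y_k)$ from \eqref{eqn: fista-gs}; this converts both into inner products $\langle G_s(y_k), \cdot\rangle$ plus the quadratic defect $-(s - \tfrac{s^2L}{2})\|G_s(y_k)\|^2$. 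For the kinetic difference I expand $v_{k+1} - v_k$ through the two FISTA recursions, expressing it linearly in $x_{k+1} - x_k$ and in $G_s(y_k)$; the $\sqrt{s}$-correction in $v_k$ is exactly what injects a clean $\|sG_s(y_k)\|^2$ contribution.

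The crux --- and the step I expect to be by far the hardest --- is verifying that, after these substitutions, all cross terms $\langle G_s(y_k), y_k - x_k\rangle$, $\langle G_s(y_k), y_k - x^\star\rangle$ and $\langle G_s(y_k), x_{k+1} - x_k\rangle$ cancel identically, leaving only squared terms. This cancellation is not accidental: it is forced by the precise coefficients ($\sqrt{s}$, $r$, and the weight $k(k+r)$ rather than $(k+r)^2$) in \eqref{eqn: lyapunov-fista}, and tracking the exact constants through the expansion is where the factor $(1-sL)$ and the weight $(k+r)^2$ are produced, while the residual $\Phi$-coefficient is the quantity that is nonnegative for $r \ge 2$ and strictly positive (of order $k$) for $r > 2$. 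I would organize this computation through the phase-space representation of \eqref{eqn: fista-gs}; the gradient-correction and implicit-velocity forms give two equivalent routes to the same dissipation inequality.

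Granting the dissipation inequality, the conclusions follow quickly. Since its right-hand side is $\le 0$ for $s \le 1/L$ and $r \ge 2$, the potential $\mathcal{E}$ is nonincreasing, so $\mathcal{E}(k+1) \le \mathcal{E}(0) = \tfrac{r^2}{2}\|x_0 - x^\star\|^2$; discarding the nonnegative kinetic part and keeping $s(k+1)(k+r+1)(\Phi(x_{k+1}) - \Phi(x^\star))$ yields \eqref{eqn: fista-obj}. For \eqref{eqn: fista-grad}, when $s < 1/L$ I telescope the first term to get $\tfrac{s^2(1-sL)}{2}\sum_{i=0}^k (i+r)^2\|G_s(y_i)\|^2 \le \mathcal{E}(0)$, then bound the minimum by the sum divided by $\sum_{i=0}^k (i+r)^2 = \tfrac16(k+1)(2k^2 + (6r+1)k + 6r^2)$, which reproduces the stated explicit bound. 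The two little-$o$ limits are the standard ``summability forces $o$'' argument: from $\sum_i (i+r)^2\|G_s(y_i)\|^2 < \infty$, restricting to the window $\lceil k/2\rceil \le i \le k$ (on which $(i+r)^2 \ge k^2/4$ and which contains $\gtrsim k$ indices) forces the windowed minimum of $\|G_s(y_i)\|^2$ to be $o(1/k^3)$; and for \eqref{eqn: fista-obj-fast}, when $r > 2$ the retained term telescopes to $\sum_k w_k(\Phi(x_{k+1}) - \Phi(x^\star)) < \infty$ with $w_k \sim k$, and the same windowing over $\lceil k/2\rceil \le i \le k$ gives $k^2\,(\min_i \Phi(x_i) - \Phi(x^\star)) \to 0$.
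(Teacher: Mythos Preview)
Your proposal is correct and follows essentially the same approach as the paper: the same Lyapunov function~\eqref{eqn: lyapunov-fista}, the same two applications of \Cref{lem: key-inequality} (with $x=x_k$ and $x=x^\star$), the same cross-term cancellation via the phase-space representation, and the same telescoping/windowing arguments to extract \eqref{eqn: fista-obj}--\eqref{eqn: fista-obj-fast}. Your target dissipation inequality matches the paper's \eqref{eqn: final-iterative} (there the residual $\Phi$-coefficient is $s[(r-2)k + r^2 - r - 1]$, consistent with your ``$w_k$ of order $k$''), and your explicit evaluation $\sum_{i=0}^k (i+r)^2 = \tfrac16(k+1)(2k^2+(6r+1)k+6r^2)$ and the $\lceil k/2\rceil$-to-$k$ windowing are exactly what the paper invokes.
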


Here, we provide two proofs for~\Cref{thm: fista} by both the gradient-correction scheme and the implicit-velocity scheme~\citep{chen2022gradient} in the following sections, respectively.

\subsection{The gradient-correction scheme}
\label{subsec: gradient-correction-scheme}
Similarly to~\citep{chen2022gradient}, with the velocity iterates $v_{k-1}= (y_k - y_{k-1})/\sqrt{s}$, we obtain the gradient-correction phase-space representation of FISTA~\eqref{eqn: fista-gs} as  
\begin{equation}
\label{eqn: grad-correct-phasespace}
    \left\{\begin{aligned}
        & y_k - y_{k-1} = \sqrt{s}v_{k-1},\\
        & v_k - v_{k-1} = -\frac{r+1}{k}v_k - \left(1 + \frac{r+1}{k}\right)\sqrt{s}G_s(y_k) - \sqrt{s}(G_s(y_k) - G_s(y_{k-1})),
            \end{aligned}\right.
\end{equation}
with any initial value $y_0\in\mathbb{R}^d$ and $v_0 = -\sqrt{s}G_s(y_0)$. It is noted here that the gradient-correction phase-space representation~\eqref{eqn: grad-correct-phasespace} is an explicit-implicit scheme. Correspondingly, the discrete Lyapunov function~\eqref{eqn: lyapunov-fista} is reformulated as
\begin{equation}
\label{eqn: lyapunov-fista-grad-correction}
 \mathcal{E}(k) = sk(k+r)(\Phi(x_k) - \Phi(x^\star)) + \frac{1}{2}\|\sqrt{s}kv_{k-1} + r(y_k - y^\star) + skG_s(y_{k-1})\|^2.
\end{equation} 
Furthermore, we reformulate the second iterative equality of the gradient-correction phase-space representation~\eqref{eqn: grad-correct-phasespace} as
\begin{equation}
\label{eqn:phase-ista2}
(k+r+1)v_{k} - kv_{k-1} + \sqrt{s}\left[(k+r+1)\nabla G_s(y_k) - k\nabla G_s(y_{k-1}) \right] =  - k\sqrt{s}G_s(y_k).
\end{equation}
for convenience.

\begin{proof}[Proof of~\Cref{thm: fista} in the gradient-correction scheme]
Here, we present the proof in three steps to make it clear.

\begin{itemize}
\item[\textbf{(1)}] In order to obtain the iterative difference of the Lyapunov function~\eqref{eqn: lyapunov-fista-grad-correction}, $\mathcal{E}(k+1) - \mathcal{E}(k)$, the key point here is to compute the iterative difference of the second term as
\begin{align*}
      & \left[\sqrt{s}(k+1)v_{k} + r(y_{k+1} - x^\star) + s(k+1)G_s(y_{k}) \right] - \left[\sqrt{s}kv_{k-1} + r(y_{k} - x^\star) + skG_s(y_{k-1}) \right] \\
  =   & \sqrt{s}(k+r+1)v_{k} - \sqrt{s}kv_{k-1} + s(k+1)G_s(y_{k}) - skG_s(y_{k-1}). 
\end{align*}
Combined with the second iterative inequality~\eqref{eqn:phase-ista2}, the iterative difference is calculated as
\begin{align}
      & \left[\sqrt{s}(k+1)v_{k} + r(y_{k+1} -x^\star) + s(k+1)G_s(y_{k}) \right] - \left[\sqrt{s}kv_{k-1} + r(y_{k} - x^\star) + skG_s(y_{k-1}) \right] \nonumber \\
  =   & - s(k+r)G_s(y_{k}). \label{eqn:difference-a}
\end{align}

\item[\textbf{(2)}] Then, with the subscripts labeled, we calculate the iterative difference of the Lyapunov function~\eqref{eqn: lyapunov-fista-grad-correction} as
\begin{align*}
\mathcal{E}&(k+1) - \mathcal{E}(k) \\
& =  \underbrace{ sk(k+r)\left( \Phi(x_{k+1}) - \Phi(x_{k}) \right) + s(2k+r+1)\left( \Phi(x_{k+1}) - \Phi(x^\star) \right) }_{\mathbf{I}}  + \frac{s^2(k+r)^2}{2}\big\| G_s(y_{k}) \big\|^2\\
                                  & \mathrel{\phantom{=}} - \underbrace{s(k+r)\left\langle G_s(y_{k}), \sqrt{s}kv_{k-1} + r(y_{k} - x^\star) + skG_s(y_{k-1})\right\rangle}_{\mathbf{II}} \\
& =   \underbrace{sk(k+r)\left( \Phi(x_{k+1}) - \Phi(x_{k}) \right) }_{\mathbf{I}_1}+ \underbrace{s(2k+r+1)\left( \Phi(x_{k+1}) - f(x^\star) \right)}_{\mathbf{I}_2}  + \frac{s^2(k+r)^2}{2}\big\| G_s(y_{k}) \big\|^2\\
                                  & \mathrel{\phantom{=}} - \underbrace{sk(k+r)\left\langle G_s(y_{k}), y_{k} - y_{k-1} \right\rangle }_{\mathbf{II}_1}- \underbrace{sr(k+r)\left\langle G_s(y_{k}),  y_{k} - x^\star\right\rangle}_{\mathbf{II}_2} - \underbrace{s^{2}k(k+r)\langle G_s(y_{k}), G_s(y_{k-1}) \rangle}_{\mathbf{II}_3} 
	\end{align*}

\item[\textbf{(3)}] Here, this step is the essential difference with~\citep[Section 3]{chen2022gradient}. With~\Cref{lem: key-inequality}, we can obtain the following two inequalites by putting $x_{k+1}$, $x_k$ and $x_{k+1}$, $x^\star$ into~\eqref{eqn: key-inequality} as  
    \begin{align}
        &\Phi(x_{k+1}) - \Phi(x_k) \le \langle G_s(y_k), y_k - x_k\rangle - \left(s - \frac{s^2 L}{2}\right)\|G_s(y_k)\|^2, \label{eqn: key-1-inq}\\
        &\Phi(x_{k+1}) - \Phi(x^\star) \le \langle G_s(y_k), y_k - x^\star\rangle - \left(s - \frac{s^2 L}{2}\right)\|G_s(y_k)\|^2. \label{eqn: key-2-inq}
    \end{align}
With the inequality~\eqref{eqn: key-1-inq}, we obtain the estimate for $\mathbf{I}_1 - \mathbf{II}_1 -  \mathbf{II}_3$ as
\begin{align*}
\mathbf{I}_1 &- \mathbf{II}_1 -  \mathbf{II}_3 \\
         & =        sk(k+r)\left( \Phi(x_{k+1}) - \Phi(x_{k}) \right) - sk(k+r)\left\langle G_s(y_{k}), y_{k} -  y_{k-1} \right\rangle - s^{2}k(k+r)\langle G_s(y_{k}), G_s(y_{k-1}) \rangle\\
                             & \leq   - sk(k+r)\left\langle G_s(y_{k}), x_{k} -  y_{k-1} \right\rangle  - k(k+r)\left(s^2 - \frac{s^3 L}{2}\right)\|G_s(y_k)\|^2 - s^{2}k(k+r)\langle G_s(y_{k}), G_s(y_{k-1}) \rangle \\
                             & =  - k(k+r)\left(s^2 - \frac{s^3 L}{2}\right)\|G_s(y_k)\|^2.    
\end{align*}
Moreover, with the inequality~\eqref{eqn: key-2-inq},  we calculate the difference between $\mathbf{I}_2$ and $\mathbf{II}_2$ as
\begin{align*}
\mathbf{I}_2 - \mathbf{II}_2 & =   s(2k+r+1)\left( \Phi(x_{k+1}) - f(x^\star) \right) - sr(k+r)\left\langle G_s(y_{k}),  y_{k} - x^\star\right\rangle \\
                             & \leq  -  r(k+r)\left(s^2 - \frac{s^3 L}{2}\right)\|G_s(y_k)\|^2- s \left[ (r-2)k + r^2 - r - 1  \right]\left( \Phi(x_{k+1}) - \Phi(x^\star) \right).
\end{align*}

Hence, the iterative difference of the Lyapunov function~\eqref{eqn: lyapunov-fista-grad-correction} is computed as
\begin{align}
\mathcal{E}(k+1) - \mathcal{E}(k) \leq & - \frac{s^2(k+r)^2}{2}\left(1- sL\right)\|G_s(y_k)\|^2 \nonumber \\
                                       & - s \left[ (r-2)k + r^2 - r - 1  \right]\left( \Phi(x_{k+1}) - \Phi(x^\star) \right). \label{eqn: final-iterative}   
\end{align}
Obviously, the convergence rate of the objective values~\eqref{eqn: fista-obj} is obtained. When the step size satisfies $0<s<1/L$, we can derive the gradient norm minimization~\eqref{eqn: lyapunov-fista-grad-correction} by summarizing the ineuqality~\eqref{eqn: final-iterative} from $0$ to $k$ and from $\left \lfloor  k/2 \right \rfloor$ to $k$ and then taking the minimum of the proximal subgradient norm. Furthermore, summarizing the inequality from $\left \lfloor  k/2 \right \rfloor$ to $k$, we take the minimum of the objective value $\Phi(x_{k+1}) - \Phi(x^\star)$ to obtain the faster rate of objective values~\eqref{eqn: fista-obj-fast}.
\end{itemize}
\end{proof}

\subsection{The implicit-velocity scheme}
\label{subsec: implicit-velocity-scheme}

Similarly to~\citep{chen2022gradient}, with the velocity iterates $v_{k}= (x_k - x_{k-1})/\sqrt{s}$, we obtain the implicit-velocity phase-space representation of FISTA~\eqref{eqn: fista-gs} as  
\begin{equation}
\label{eqn: implicit-velocity-phasespace}
\left\{\begin{aligned}
& x_{k+1} - x_{k} = \sqrt{s}v_{k+1}, \\
&v_{k+1} - v_{k} = -\frac{r+1}{k+r}v_k - \sqrt{s}G_s\left(x_k + \frac{k-1}{k+r}\sqrt{s}v_k\right),
\end{aligned} \right.
\end{equation}
with any initial value $x_0\in\mathbb{R}^d$ and $v_0 = 0$. It is noted here that the implicit-velocity phase-space representation~\eqref{eqn: implicit-velocity-phasespace} is an implicit-explicit scheme. Correspondingly, the discrete Lyapunov function~\eqref{eqn: lyapunov-fista} is reformulated as
\begin{equation}
\label{eqn: lyapunov-fista-implicit-velocity}
\mathcal{E}(k) = sk(k+r)(\Phi(x_k) - \Phi(x^\star)) + \frac{1}{2}\|\sqrt{s}(k-1)v_{k} + r(x_k - x^\star)\|^2.
\end{equation}
Furthermore, we reformulate the second iterative equality of the implicit-velocity phase-space representation~\eqref{eqn: implicit-velocity-phasespace} as
\begin{equation}
\label{eqn: phase-xy-1}
    (k+r)v_{k+1} - (k-1)v_k = -(k+r)\sqrt{s}G_s\left(y_k\right)
\end{equation}
and the second equality of the original FISTA as
\begin{equation}
\label{eqn: nag-2-yx}
    y_k = x_k + \frac{k-1}{k+r}\cdot\sqrt{s}v_k
\end{equation}
for convenience.

\begin{proof}[Proof of~\Cref{thm: fista} in the implicit-velocity scheme]
Here, we also present the proof in three steps to make it clear.

\begin{itemize}
\item[\textbf{(1)}] In order to obtain the iterative difference of the Lyapunov function~\eqref{eqn: lyapunov-fista-implicit-velocity}, $\mathcal{E}(k+1) - \mathcal{E}(k)$, the key point here is to compute the iterative difference of the second term as
\begin{multline*}
 \left[\sqrt{s}kv_{k+1}+r(x_{k+1}-x^\star)\right]  - \left[\sqrt{s}(k-1)v_k+r(x_k-x^\star)\right] \\
    =  \sqrt{s}kv_{k+1} - \sqrt{s}(k-1)v_{k} + r(x_{k+1}-x_{k}). 
\end{multline*}
Combined with the second iterative inequality~\eqref{eqn: phase-xy-1}, the iterative difference is calculated as
\begin{align}
 \left[\sqrt{s}kv_{k+1}+r(x_{k+1}-x^\star)\right]  - \left[\sqrt{s}(k-1)v_k+r(x_k-x^\star)\right]  = -s(k+r)G_s(y_{k}). \label{eqn: im-vel-prox}
\end{align}
With~\eqref{eqn: implicit-velocity-phasespace} and~\eqref{eqn: nag-2-yx}, we can calculate the difference between $y_k$ and $y_{k-1}$ as
\begin{align}
y_k - y_{k-1}= \frac{k-1}{k+r}\cdot \sqrt{s}v_k - s G_s(y_{k-1}).     \label{eqn: fista-tran}
\end{align}

\item[\textbf{(2)}] Then, with the subscripts labeled, we calculate the iterative difference of the Lyapunov function~\eqref{eqn: lyapunov-fista-implicit-velocity} as
\begin{align*}
\mathcal{E}(k+1) -\mathcal{E}(k) &=  sk(k+r)\left(\Phi\left(x_{k+1}\right) - \Phi\left(x_{k}\right)\right) + s(2k+r+1)\left(\Phi\left(x_{k+1}\right) - \Phi(x^\star)\right) \\
& \mathrel{\phantom{=}} -  \left\langle s(k+r)G_s(y_k), \sqrt{s}(k-1)v_k + r(x_k - x^\star)\right\rangle   + \frac{s^2(k+r)^2}{2} \| G_s(y_k) \|^2 \\ 
 &=  sk(k+r)\left(\Phi\left(x_{k+1}\right) - \Phi\left(x_{k}\right)\right) + s(2k+r+1)\left(\Phi\left(x_{k+1}\right) - \Phi(x^\star)\right) \\
& \mathrel{\phantom{=}} -  \left\langle s(k+r)G_s(y_k), \frac{\sqrt{s}k(k-1)v_k}{k+r} + r(y_k - x^\star)\right\rangle   + \frac{s^2(k+r)^2}{2} \| G_s(y_k) \|^2 \\ 
&=  \underbrace{sk(k+r)\left(\Phi\left(x_{k+1}\right) - \Phi\left(x_{k}\right)\right)}_{\mathbf{I}_1} + \underbrace{s(2k+r+1)\left(\Phi\left(x_{k+1}\right) - \Phi(x^\star)\right)}_{\mathbf{I}_2} \\
& \mathrel{\phantom{=}} -\underbrace{ s^{\frac{3}{2}}k(k-1) \left\langle G_s(y_k), v_k \right\rangle}_{\mathbf{II}_1} - \underbrace{sr(k+r) \left\langle G_s(y_k), y_k - x^\star\right\rangle}_{\mathbf{II}_2} \\ & \mathrel{\phantom{=}}  + \frac{s^2(k+r)^2}{2} \| G_s(y_k) \|^2,
\end{align*}

\item[\textbf{(3)}] This step is also the essential difference with~\citep[Section 4.2]{chen2022gradient}. With~\Cref{lem: key-inequality}, we can obtain one inequality by putting $x_{k+1}$ and $x_k$ into~\eqref{eqn: key-inequality} as   
    \begin{align}
        \Phi(x_{k+1}) - \Phi(x_k)     &\le \langle G_s(y_k), y_k - x_k\rangle - \left(s - \frac{s^2 L}{2}\right)\|G_s(y_k)\|^2 \nonumber\\
                                      & = \langle G_s(y_k), y_k - y_{k-1} - \left( x_k - y_{k-1} \right)\rangle - \left(s - \frac{s^2 L}{2}\right)\|G_s(y_k)\|^2 \nonumber\\ 
                                      & = \frac{k-1}{k+r} \cdot \sqrt{s} \langle G_s(y_k), v_k\rangle - \left(s - \frac{s^2 L}{2}\right)\|G_s(y_k)\|^2  \label{eqn: fista-iv-1-key}
    \end{align}
where the last equality follows FISTA and the expression of $y_{k} - y_{k-1}$ in~\eqref{eqn: fista-tran}. Directly, with~\eqref{eqn: fista-iv-1-key}, we can estimate the difference between $\mathbf{I}_1$ and $\mathbf{II}_1$ as

\[
\mathbf{I}_1 - \mathbf{II}_1 \leq  - s^2k(k+r)\left(1 - \frac{s L}{2}\right)\|G_s(y_k)\|^2.
\]
Similarly, with~\Cref{lem: key-inequality}, we can obtain the other inequality  by putting $x_{k+1}$ and $x^\star$ into~\eqref{eqn: key-inequality} as 
\begin{equation*}
    \Phi(x_{k+1}) - \Phi(x^\star)  \le \langle G_s(y_k), y_k - x^\star\rangle - \left(s - \frac{s^2 L}{2}\right)\|G_s(y_k)\|^2 ,
\end{equation*}
which leads to the difference between $\mathbf{I}_2$ and $\mathbf{II}_2$ being eastimated as
\begin{align*}
\mathbf{I}_2 - \mathbf{II}_2 & =   s(2k+r+1)\left( \Phi(x_{k+1}) - \Phi(x^\star) \right) - sr(k+r)\left\langle G_s(y_{k}),  y_{k} - x^\star\right\rangle \\
                             & \leq  -  r(k+r)\left(s^2 - \frac{s^3 L}{2}\right)\|G_s(y_k)\|^2- s \left[ (r-2)k + r^2 - r - 1  \right]\left( \Phi(x_{k+1}) - \Phi(y^\star) \right).
\end{align*}
Furthermore, the iterative difference of the Lyapunov function~\eqref{eqn: lyapunov-fista-grad-correction} is computed as
\begin{align*}
\mathcal{E}(k+1) - \mathcal{E}(k) \leq & - \frac{s^2(k+r)^2}{2}\left(1- sL\right)\|G_s(y_k)\|^2 \nonumber \\
                                       & - s \left[ (r-2)k + r^2 - r - 1  \right]\left( \Phi(x_{k+1}) - \Phi(x^\star) \right),  
\end{align*}
which is exactly the same as~\eqref{eqn: final-iterative}. Therefore, the lines following~\eqref{eqn: final-iterative} complete the proof of~\Cref{thm: fista}.
\end{itemize}
\end{proof}

\section{Conclusion and discussion}
\label{sec: discussion}

In this paper, we generalize the gradient norm minimization to the composite optimization based on improving a pivotal inequality in~\citep[Lemma 2.3]{beck2009fast} and~\citep[(22)]{su2016differential}. With the well-constructed Lyapunov function, we use the phase-space representation, whatever gradient-correction or implicit-velocity, to obtain that the squared proximal subgradient norm of ISTA converges at an inverse square rate and the squared proximal subgradient norm of FISTA is accelerated to converge at an inverse cubic rate. Furthermore, we highlight some merits of the proximal subgradient norm minimization in practice. Meanwhile, we also find that the model used to characterize the linear inverse problem with sparse representation is the composite objective function, a smooth convex function plus a continuous convex function. However, the symmetric matrix $A^{T}A$ in the least square part is invertible, even ill-conditioned. In other words, there always exists a maximum and a minimum in the spectrum of the symmetric matrix $A^{T}A$. Furthermore, it is more reasonable to use a model with a quadratic function plus a continuous convex function to characterize the linear inverse problem with sparse representation, to say the least, a model with a strongly-convex function plus a continuous convex function. In further works, we will continue to investigate the convergence rate of ISTA and FISTA.   

The distinctive character of the $\ell_1$-norm is to capture the sparse structure, which is widely used in the statistics, such as the Lasso model and its variants. Based on the Lasso model, the sorted $\ell_1$-penalized estimation (SLOPE) is proposed to investigate the asymptotic minimax problem~\citep{su2016slope}, which is adaptive to unknown sparsity. Furthermore, the relation between false discoveries and the Lasso path is investigated in~\citep{su2017false}. An interesting direction is to investigate further the theory in statistics related to Lasso based on understanding and improving composite optimization algorithms. Meanwhile, we also point out that the differential inclusion used to sparse recovery~\citep{osher2016sparse} is also related to composite optimization, which is probably worth being further considered in the future.

{\small
\bibliographystyle{abbrvnat}
\bibliography{reference}
}
\end{document}